\title{Enumerating Restricted Dyck Paths with Context-Free Grammars}
\author{AJ Bu \and Robert Dougherty-Bliss}
\date{\today}
\newcommand{\Z}{\mathbb{Z}}
\newtheorem{Thm}{Theorem}
\newtheorem{Prop}[Thm]{Proposition}
\theoremstyle{definition}
\newtheorem{Def}[Thm]{Definition}
\theoremstyle{remark}
\theoremstyle{definition}
\theoremstyle{definition}
\newenvironment{Proof}{\noindent\textbf{Proof.}}{\qed}
\lstdefinelanguage{Maple}% 
{morekeywords={and,assuming,break,by,catch,description,do,done,% 
		elif,else,end,error,export,fi,finally,for,from,global,if,% 
		implies,in,intersect,local,minus,mod,module,next,not,od,% 
		option,options,or,proc,quit,read,return,save,stop,subset,then,% 
		to,try,union,use,uses,while,xor},% 
	sensitive=true,% 
	morecomment=[l]\#,% 
	morestring=[b]",% 
	morestring=[d]"% 
}[keywords,comments,strings]% 
\tiny\color{gray},
\itshape \color{orange},
\begin{document}

\maketitle

% There are lots of papers that talk about restricted Dyck paths. We should cite more of them!

\begin{abstract}
    The number of Dyck paths of semilength $n$ is famously $C_n$, the $n$th Catalan number. This fact follows after noticing that every Dyck path can be uniquely \emph{parsed} according to a context-free grammar. In a recent paper, Zeilberger showed that many \emph{restricted} sets of Dyck paths satisfy different, more complicated grammars, and from this derived various generating function identities. We take this further, highlighting some combinatorial results about Dyck paths obtained via grammatical proof and generalizing some of Zeilberger's grammars to infinite families.
\end{abstract}

\section{Introduction}%
\label{sec:introduction}

\noindent As Flajolet and Sedgewick masterfully demonstrate in their seminal
text, \emph{Analytic Combinatorics} \cite{flajolet}, mathematicians have
occasionally borrowed the study of formal languages from computer science and
linguistics for combinatorial reasons. Many combinatorial classes can be
reinterpreted as languages generated by certain grammars, and these grammars
often make writing down generating functions, another favorite combinatorial
tool, routine.

For example, consider the well-known \emph{Dyck paths}. A Dyck path is a finite
list of $+1$'s and $-1$'s whose partial sums are nonnegative, and whose sum is
$0$. We will write $U$ (up) for $+1$ and $D$ (down) for $-1$. Thus, the
following are all Dyck paths:
\begin{align*}
    &UUDD \\
    &UDUD \\
    &UUUDUDDD
\end{align*}
A Dyck path must have even length, since ``number of $U$'s'' equals ``number of
$D$'s.'' For this reason, we often refer to Dyck paths of \emph{semilength $n$}
(length $2n$).

It is a famous result that the number of Dyck paths of semilength $n$ equals
the $n$th \emph{Catalan number}:
\begin{equation*}
    C_n = \frac{1}{n + 1} {2n \choose n}.
\end{equation*}
There are many proofs of this fact, but here is a \emph{grammatical} proof.

Let $\mathcal{P}$ denote the set of all Dyck paths. Then, $\mathcal{P}$ is
generated by the unambiguous, context-free grammar
\begin{equation*}
    \mathcal{P} = \epsilon \ \cup \ U\mathcal{P}D\mathcal{P},
\end{equation*}
where $\epsilon$ denotes the empty string. In words, a path is either empty or
begins with a $U$, is followed by a Dyck path (shifted to height $1$), a $D$,
then another Dyck path. This is a unique parsing of all Dyck paths.

Given a set of objects $E$ each with a nonnegative integer size,
let $GF(E) = \sum_{k \geq 0} |E(k)| z^k$ be a formal generating function,
where $|E(k)|$ is the number of objects of size $k$ in $E$. The main result
about formal grammars is that, in an unambiguous context free grammar,
\begin{equation*}
    GF(A \cup B) = GF(A) + GF(B)
\end{equation*}
and
\begin{equation*}
    GF(AB) = GF(A) GF(B),
\end{equation*}
where the ``sizes'' of the grammar are the lengths of the words it generates.

In our case, if $P(z)$ is the generating function for the number of Dyck paths
of semilength $n$, then this grammar implies
\begin{align*}
    P(z) &= GF(\epsilon) + GF(U\mathcal{P}D\mathcal{P}) \\
         &= 1 + z P(z)^2.
\end{align*}
(There is exactly one empty Dyck path [which has semilength 0], and the
presence of $U$ and $D$ increases the semilength by $1$.) The generating
function $C(z)$ for the Catalan numbers \emph{also} satisfies
\begin{equation*}
    C(z) = 1 + z C^2(z),
\end{equation*}
and since there are only two possible solutions, it is not hard to see that
$P(z) = C(z)$.

The grammatical technique offers a unifying framework: Devise a grammar and you
get an equation. Sometimes the equations turn out to be well-known. Other times
they are complicated messes. The enumeration of all Dyck paths is one
application of this framework, and here we want to demonstrate others. In
particular, we will give grammatical proofs of several combinatorial facts
about \emph{restricted} Dyck paths, and also establish several infinite
families of grammars in closed form.

First, let us define the restrictions we shall consider.

\begin{Def}
    Given a Dyck path, the \emph{height} of the path at position $k$ is the
    partial sum of the path after its $k$th term. A \emph{peak} of a Dyck path
    at height $h$ (or simply ``at $h$'') is the bigram $UD$ where the height of
    the path after the $U$ is $h$. Similarly, a \emph{valley} occurs at the
    bigram $DU$, and its height is analogously defined. The empty path has,
    by convention, a peak at $0$ but no valley.

    Given a sequence of steps $L$, define $L^n$ to be the repetition of $L$ $n$ times. (For example, $U^2 = UU$ and $(UD)^3 = UDUDUD$.)

    A Dyck path has an \emph{up-run of length $n$} provided that it contains at least one $U^n$ that is not preceded nor followed by $U$. Similarly, it contains a \emph{down-run of
    length $n$} provided that it contains at least one $D^n$ that is neither preceded nor followed by $D$.
\end{Def}

We are generally considered with Dyck paths whose peaks and valley heights
avoid certain sets, and whose up-run and down-run lengths avoid certain sets,
and combinations of the four conditions. We will, for example, discuss the set
of all Dyck paths whose peak heights avoid $\{2, 4, 6, \dots\}$ and have no
up-run of length greater than $2$.

When a set $\mathcal{P}$ of Dyck paths has been specified and used in an
expression, such as $\mathcal{P} = U \mathcal{P} D \mathcal{P}$, it is
shorthand for ``any (possibly vertically shifted) Dyck path from
$\mathcal{P}$.''

\begin{Def}
    For arbitrary sets of positive integers $A$, $B$, $C$, and $D$, let $P(A,
    B, C, D)$ be the set of Dyck Dyck paths whose peaks heights avoid $A$,
    whose valleys avoid $B$, whose up-run lengths avoid $C$, and whose down-run
    lengths avoid $D$. Let $P_{A, B, C, D}(z)$ be be the generating function
    for the number of Dyck paths of semilength $n$ in $P(A, B, C, D)$.
\end{Def}

Some of these sets have been studied. In \cite{peart}, Peart and Woan provide a
continued-fraction recurrence for the generating functions $P_{\{k\},
\emptyset, \emptyset, \emptyset}(z)$. In \cite{eu}, where Eu, Liu, and Yeh take
this idea further and express $P_{A, \emptyset, \emptyset, \emptyset}(z)$ as a
finite continued fraction whenever $A$ is finite or an arithmetic progression.
In \cite{z}, Zeilberger presents a rigorous experimental method to derive
equations for $P_{A, B, C, D}(z)$ when the sets involved are finite or
arithmetic progressions. Proving ``by hand'' some of Zeilberger's interesting
discoveries \emph{ex post facto} was a motivation for the present work.
We generalize some of Zeilberger's results to infinite families which are
likely out of reach for symbolic methods.

Our results include several explicit grammars (and therefore generating
function equations) for infinite families of the sets $A$ and $B$, and also
grammatical proofs of several interesting special cases suggested in \cite{z}.
Many of these---any grammars referencing restrictions on up- or down-runs---are
not in \cite{eu}. Some of our results are suggested in the OEIS \cite{oeis};
see, for example, A1006 (Motzkin numbers) and A004148 (generalized Catalan
numbers).

The remainder of the paper is organized as follows. Section~\ref{sec:results}
presents some results discovered by experimentation with software from \cite{z}
and proven with grammatical methods. Section~\ref{sec:grammar} presents some
infinite families of explicit grammars. Section~\ref{sec:conclusion} offers
some concluding remarks about the limitations of grammars.

\section{Combinatorial results}%
\label{sec:results}

In this section we will present a number of results with grammatical proofs. We
will often abuse notation and use one symbol---$P$, for example---to
simultaneously denote a set of Dyck paths, a generating function, and a
non-terminal symbol in a formal grammar.

\begin{Prop}
    The number of Dyck paths of semilength $n$ whose peak heights avoid $\{2r +
    3 \mid r \geq 0\}$ and whose up-runs are no longer than $2$ is $1$ when $n
    = 0$, and $2^{n - 1}$ when $n \geq 1$.
\end{Prop}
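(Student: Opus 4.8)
The plan is to show that the two restrictions together force the path to stay weakly below height $2$, and then to enumerate the height-$\le 2$ Dyck paths with a short grammar. First I would pin down the target generating function: the proposed counts $1,1,2,4,8,\dots$ sum to
\[
\sum_{n\ge 0} c_n z^n = 1 + \sum_{n\ge 1} 2^{n-1} z^n = 1 + \frac{z}{1-2z} = \frac{1-z}{1-2z},
\]
so it suffices to exhibit an unambiguous grammar whose generating function is $\tfrac{1-z}{1-2z}$ and then read off its coefficients.

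The heart of the argument is the claim that a path avoiding peaks at every odd height $\ge 3$ and having no up-run longer than $2$ can never reach height $3$. I would prove this by contradiction: let $k$ be the first position at which the height is $3$. The step into position $k$ is a $U$, and the maximal up-run ending there has length $\ell\in\{1,2\}$ (length $\ge 3$ is forbidden outright, and a run reaching height $3$ cannot start below height $0$). If $\ell=2$, the run maxes out at position $k$, so the next step is forced to be a $D$; then position $k$ is a peak at height $3$, which is forbidden. If $\ell=1$, the $U$ into position $k$ is preceded by a $D$, so height $3$ already occurred two steps earlier, contradicting minimality of $k$. Hence the maximal height is $2$. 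Conversely, any Dyck path of height $\le 2$ automatically meets both restrictions: its peaks lie at height $1$ or $2$ (never an odd number $\ge 3$), and any up-run leaving a valley at height $v\in\{0,1\}$ has length at most $2-v\le 2$. Thus the restricted class is exactly the set of Dyck paths of height at most $2$. Establishing this equivalence is the step I expect to require the most care, since it is precisely where the interaction between the peak-height and up-run restrictions is used.

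With the characterization in hand, the grammar writes itself. Decomposing each path at its returns to height $0$ into primitive arches is legitimate because every arch is based at height $0$ (so the peak restriction inside an arch is independent of its position), and consecutive arches are separated by a $DU$ at height $0$, so no up-run or peak straddles a boundary; there are moreover no valley or down-run restrictions to worry about. A primitive Dyck path of height $\le 2$ and semilength $m$ is forced: it is $UD$ when $m=1$, and when $m\ge 2$ it must open with $UU$, after which it is trapped between heights $1$ and $2$ until its final descent, giving $UU(DU)^{m-2}DD$. This yields the unambiguous grammar
\[
\mathcal{P}=\epsilon\ \cup\ \mathcal{A}\mathcal{P},\qquad \mathcal{A}=UD\ \cup\ UU\,\mathcal{C}\,DD,\qquad \mathcal{C}=\epsilon\ \cup\ DU\,\mathcal{C}.
\]
Translating to generating functions gives $C=\tfrac{1}{1-z}$, then $A=z+\tfrac{z^2}{1-z}=\tfrac{z}{1-z}$, and finally $P=\tfrac{1}{1-A}=\tfrac{1-z}{1-2z}$, matching the target. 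Extracting coefficients from $(1-z)\sum_{n\ge 0}2^n z^n$ recovers $c_0=1$ and $c_n=2^{n-1}$ for $n\ge 1$, completing the proof.
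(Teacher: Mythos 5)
Your proof is correct, but it takes a genuinely different route from the paper. The paper never identifies the restricted class structurally: it writes down a two-nonterminal grammar directly, introducing the auxiliary class $Q$ of paths whose peaks avoid $\{2r+2 \mid r \geq 0\}$ (with the same up-run bound) and parsing $P = \epsilon \,\cup\, UDP \,\cup\, UUDQDP$, $Q = \epsilon \,\cup\, UDQ$; the parity bookkeeping between $P$ and $Q$ does all the work, and the system $P = 1 + zP + z^2QP$, $Q = 1 + zQ$ yields $P(z) = \frac{1-z}{1-2z}$. You instead prove a structural lemma---the two restrictions are \emph{equivalent} to the path having height at most $2$---and then enumerate bounded-height paths by the standard arch decomposition. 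Your first-passage-to-height-$3$ argument is sound (the $\ell = 2$ case should note that a next step exists because a Dyck path cannot end at height $3$, but that is immediate), and the converse direction is handled correctly, including the check that concatenating arches creates no straddling up-run or peak. Note that the two parses are at bottom the same: the paper's $UUD\,(UD)^j\,D$ arising from $UUDQDP$ is literally the string $UU(DU)^jDD$ of your $\mathcal{A}$, and the paper's $Q$ is forced to be $(UD)^n$ just as your $\mathcal{C}$ is forced to be $(DU)^j$. What your approach buys is transparency: the identification with height-$\leq 2$ paths explains \emph{why} the answer is the familiar $\frac{1-z}{1-2z}$ and gives a characterization absent from the paper. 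What the paper's approach buys is robustness: the auxiliary-nonterminal method needs no height bound and extends directly to the next proposition (up-runs $\leq 3$, where the class has unbounded height and no such clean structural description exists), whereas your lemma is special to this case.
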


\begin{proof}
    Let $P$ be the set of all such Dyck paths, and $Q$ the set of all Dyck
    paths which avoid peaks in $\{2r + 2\}$ and up-runs longer than $2$. Note
    that $P$ and $Q$ satisfy the following grammar:
    \begin{align*}
        P &= \epsilon \ \cup \ UDP\ \cup\ UUD Q D P \\
        Q &= \epsilon \ \cup \ UD Q.
    \end{align*}
    This implies the following system of equations:
    \begin{align*}
        P &= 1 + z P + z^2 Q P \\
        Q &= 1 + z Q.
    \end{align*}
    Thus $Q(z) = (1 - z)^{-1}$ (the only path in $Q$ of semilength $n$ is
    $(UD)^n$) and
    \begin{equation*}
        P(z) = \frac{1 - z}{1 - 2z}.
    \end{equation*}
    Therefore $[z^0] P(z) = 1$ and $[z^n] P(z) = 2^{n - 1}$.
\end{proof}

\begin{Prop}
    The number of Dyck paths of semilength $n$ whose peak heights avoid $\{2r +
    3 \mid r \geq 0\}$ and whose up-runs are no longer than $3$ equals the $(n
    + 1)$th generalized Catalan number $G_{n + 1}$, defined by
    \begin{align*}
        G_0 &= 1 \\
        G_1 &= 1 \\
        G_{n + 2} &= G_{n + 1} + \sum_{1 \leq k < n + 1} G_k G_{n - k}.
    \end{align*}
\end{Prop}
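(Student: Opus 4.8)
The plan is to mirror the grammatical proof of the previous proposition, but now allowing up-runs of length up to $3$, which introduces a third production and a more elaborate auxiliary nonterminal.  First I would set up the relevant sets.  Let $P$ be the set of Dyck paths whose peak heights avoid the odd numbers $\geq 3$ and whose up-runs have length at most $3$.  As in the previous proof, a path in $P$ is either empty, or begins with an up-run $U^k$ with $k \in \{1, 2, 3\}$.  The up-run of length $1$ must be immediately followed by a $D$ (otherwise the $U$ would extend into a longer run), giving a peak at height $1$, which is allowed; this contributes a term like $U D P$.  An up-run of length $2$ reaches height $2$ and is followed by a descent and some continuation, while an up-run of length $3$ reaches height $3$, but a peak at height $3$ is forbidden, so the step after $U^3$ must be another $U$ — contradicting the run-length bound — \emph{unless} that $U$ belongs to a fresh up-run \emph{after} an intervening $D$.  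So I would carefully enumerate which continuations are legal after each initial up-run, tracking both the peak-height parity constraint and the run-length constraint, exactly as in the length-$2$ case but with one more branch.

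The key technical step will be identifying the correct auxiliary nonterminal(s).  In the length-$2$ proposition the authors used a single helper $Q = \epsilon \cup U D Q$ to encode the structure reachable at the elevated height.  Here, because up-runs of length $3$ are permitted but peaks at odd heights $\geq 3$ are banned, the continuation after reaching an even height behaves differently from the continuation after reaching an odd height.  I would therefore guess that one needs an auxiliary set $Q$ (say, the analogous family of paths with the peak set shifted by the parity flip, i.e.\ peaks avoiding the \emph{even} numbers $\geq 2$, with up-runs $\leq 3$) together with the main set $P$, much as $P$ and $Q$ appeared together before.  The goal is to write down a closed grammar
\begin{align*}
    P &= \epsilon \ \cup \ (\text{terms built from } U,\,D,\,P,\,Q) \\
    Q &= \epsilon \ \cup \ (\text{terms built from } U,\,D,\,P,\,Q),
\end{align*}
where each production corresponds to one legal choice of initial up-run length and the forced structure it imposes.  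Verifying unambiguity — that every path in $P$ is parsed exactly once — is routine once the first up-run length is read off, since that length determines which production fires.

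Translating the grammar into generating functions via $GF(A \cup B) = GF(A) + GF(B)$ and $GF(AB) = GF(A)\,GF(B)$ yields a polynomial system in $P(z)$ and $Q(z)$.  I would eliminate $Q$ to obtain a single algebraic equation for $P(z)$, and then compare it against the functional equation satisfied by the generating function $G(z) = \sum_{n \geq 0} G_n z^n$ of the generalized Catalan numbers.  From the stated recurrence $G_{n+2} = G_{n+1} + \sum_{1 \leq k < n+1} G_k G_{n-k}$ one derives that $G(z)$ satisfies $G = 1 + zG + z G(G - 1) = 1 + z G^2$ — wait, more carefully the convolution term gives an equation of the form $G(z) = 1 + z\,G(z) + z^2\,\bigl(\text{shifted convolution}\bigr)$; I would compute this functional equation explicitly and then show that the shifted series $\sum_{n \geq 0} G_{n+1} z^n = (G(z) - 1)/z$ satisfies exactly the algebraic equation obtained for $P(z)$.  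Matching the two equations, together with agreement of the initial coefficients (so as to pin down the correct branch of the algebraic solution), will establish $[z^n] P(z) = G_{n+1}$.

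The main obstacle will be getting the grammar right — specifically, correctly capturing the interaction between the forbidden odd peak heights and the permitted length-$3$ up-runs, and deciding exactly which auxiliary nonterminal is needed and what its productions are.  Once the grammar is correct and unambiguous, the generating-function bookkeeping and the comparison with $G(z)$ are mechanical.
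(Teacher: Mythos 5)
Your overall strategy---an unambiguous grammar with parity-aware auxiliary nonterminals, conversion to a polynomial system, and matching against the generalized Catalan series via $G(z) = zP(z) + 1$---is exactly the paper's route, but there is a genuine gap: the grammar, which is the entire content of the proof, is left as a guess, and the two concrete structural guesses you make are wrong. First, your analysis of an initial up-run of length $3$ is incoherent: after $U^3$ the next step is either $D$ (a forbidden peak at height $3$) or $U$ (a run of length $\geq 4$), so no path of $P$ begins with $U^3$ at all; there is no ``unless'' escape via an intervening $D$, because that intervening $D$ is precisely the forbidden peak. Length-$3$ up-runs occur only at elevated heights, where the ambient parity changes which peak set is being avoided---and that is exactly why the auxiliaries enter. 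Second, a single auxiliary $Q$ does not close the system as a context-free grammar. The paper needs two: $O$ (up-runs $\leq 3$, peaks avoiding $\{2r+2\}$---this is your $Q$) and also $E$ (peaks avoiding $\{2r+1\}$), with
\begin{align*}
    P &= \epsilon \ \cup \ UD P\ \cup \ UUDODP, \\
    O &= \epsilon \ \cup \ UD O \ \cup \ UUU D O D E D O, \\
    E &= \epsilon \ \cup \ UUD O D E.
\end{align*}
The $UUU$ branch lives in $O$, not in $P$, and its descent through heights $2$ and $1$ creates segments at heights of both parities, which is what forces the second auxiliary $E$. Your two-nonterminal hope survives only if you allow regular operations, since $E = (UUDOD)^{*}$, i.e.\ $E = 1/(1 - z^2 O)$ at the generating-function level; as a pure CFG the mutual recursion $P \to O$, $O \to O, E$, $E \to O, E$ cannot be collapsed.

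A smaller but real slip: the functional equation you began to write for $G$ is wrong---$G = 1 + zG^2$ is the ordinary Catalan equation. The stated recurrence gives $G = 1 + zG + z^2 G(G - 1)$, equivalently $z^2G^2 - (1 - z + z^2)G + 1 = 0$, and one then checks $G(z) = zP(z) + 1$ against the solution of the system above that is holomorphic at the origin, namely $P(z) = 2/\bigl(1 - z - z^2 + (z^4 - 2z^3 - z^2 - 2z + 1)^{1/2}\bigr)$. You deferred that computation, so this part is fixable bookkeeping; but combined with the unresolved grammar, the proposal as written does not yet constitute a proof.
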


\begin{proof}
    Let $P$, $O$, and $E$ be the set of all Dyck paths with up-runs no longer
    than $3$, and whose peak heights avoid $\{2r + 3 \mid r \geq 0\}$, $\{2r +
    2 \mid r \geq 0\}$, and $\{2r + 1 \mid r \geq 0\}$, respectively. Observe
    that $P$, $O$, and $E$ satisfy the following grammar:
    \begin{align*}
        P &= \epsilon \ \cup \ UD P\ \cup \ UUDODP \\
        O &= \epsilon \ \cup \ UD O \ \cup \ UUU D O D E D O \\
        E &= \epsilon \ \cup \ UUD O D E
    \end{align*}
    This grammar implies the following equations:
    \begin{align*}
        P &= 1 + zP + z^2 OP \\
        O &= 1 + z O + z^3 EO^2 \\
        E &= 1 + z^2 OE.
    \end{align*}
    This system has two possible solutions for $P$, but only one is holomorphic
    near the origin, namely
    \begin{equation*}
        P(z) = \frac{2}{1 - z - z^2 + (z^4 - 2z^3 - z^2 - 2z + 1)^{1/2}}.
    \end{equation*}
    The generating function $G(z)$ for the generalized Catalan numbers is
    well-known to be
    \begin{equation*}
        G(z) = \frac{1 - z + z^2 - \sqrt{1 - 2z - z^2 - 2z^3 + z^4}}{2z^2},
    \end{equation*}
    and it is routine to verify that $G(z) = zP(z) + 1$. Therefore $G_{n + 1} =
    [z^n] P(z)$ for $n \geq 0$.
\end{proof}

The following proposition is concerned with \emph{Motzkin numbers} (see A1006
in the OEIS and \cite{motzkin}). A Motzkin \emph{path} is like a Dyck path, but
includes a ``sideways'' step $S$ which does not change the height. The $n$th
Motzkin number $M_n$ is the number of Motzkin paths of length $n$. The
generating function $M = M(z)$ for $M_n$ satisfies the quadratic equation
\begin{equation*}
    M = 1 + z M + z^2 M^2.
\end{equation*}

There are numerous bijections between Motzkin paths and various restricted
classes of Dyck paths. Such bijections are often variations of the ``folding''
map
\begin{align*}
    UD &\mapsto S \\
    DU &\mapsto S \\
    UU &\mapsto U \\
    DD &\mapsto D,
\end{align*}
which in general is not injective, but many restrictions on Dyck paths
\emph{make} it injective. For example, this idea shows that the Dyck paths of
semilength $n$ with no up-runs longer than $2$ are in bijection with the
Motzkin paths of length $n$. We offer a grammatical proof of this fact.

\begin{Prop}
    The number of Dyck paths of semilength $n$ which avoid up-runs of length
    $3$ or more equals the $n$th Motzkin number $M_n$.
\end{Prop}

\begin{proof}
    Let $P$ be the set of such paths. A grammar for $P$ is
    \begin{equation*}
        P = \epsilon\ \cup\ UUD P D P\ \cup\ U D P.
    \end{equation*}
    Our grammar implies that
    \begin{equation*}
        P = 1 + zP + z^2 P^2.
    \end{equation*}
    This is the same equation satisfied by the Motzkin generating function, and
    it is easy to check that $P(z) = M(z)$.
\end{proof}

\begin{Prop}
    Consider the set of Dyck paths such that no peak or valley has positive,
    even height. The numbers of such paths of semilength $2n$ and $2n+1$ are
    ${2n - 1 \choose n}$ and ${2n \choose n}$, respectively.
\end{Prop}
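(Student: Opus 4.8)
The plan is to give a grammatical proof in the style above, so the real work is isolating the right grammar; the generating-function manipulation and coefficient extraction at the end are routine. The guiding observation is that a \emph{turn} of the path at a given height is precisely a peak (a $UD$) or a valley (a $DU$) there. Hence forbidding peaks and valleys at every positive even height is the same as demanding that whenever the path occupies an even height $\ge 2$ it must pass straight through---it arrived by a $U$ and must leave by a $U$, or arrived by a $D$ and must leave by a $D$. Height $0$ is exceptional: peaks cannot occur there anyway, and valleys \emph{are} allowed, so the path may return to $0$ freely. This parity phenomenon suggests tracking the base height modulo $2$ with two non-terminals.

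First I would set up $P$, the target set (base height $0$), and $O$, the set of admissible paths sitting on an \emph{odd} base height. For $O$, the first step from an odd base $h$ is a $U$ to height $h+1$ (even); a turn there is forbidden, so the next step is forced to be another $U$, landing at $h+2$ (odd), and symmetrically the descent back to $h$ must be $DD$. The content in between is an $O$-path based at $h+2$, and after returning to $h$ (a valley at an odd height, which is allowed) the path may continue. For $P$, the base height is $0$ and its first ascent reaches height $1$ (odd), where turning \emph{is} allowed, so a minimal arch $UD$ (a peak at height $1$) is legal. These first-return decompositions are unambiguous and give
\begin{align*}
    P &= \epsilon \ \cup \ U O D P \\
    O &= \epsilon \ \cup \ U U O D D O,
\end{align*}
hence the system
\begin{align*}
    P &= 1 + z O P \\
    O &= 1 + z^2 O^2.
\end{align*}

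To finish, I would take the branch of $O$ holomorphic at the origin, $O = (1 - \sqrt{1 - 4z^2})/(2z^2)$ (the Catalan generating function in $z^2$), substitute into $P = 1/(1 - zO)$, and simplify to
\begin{equation*}
    P(z) = \frac{1}{2} + \frac{1}{2\sqrt{1 - 4z^2}} + \frac{z}{\sqrt{1 - 4z^2}}.
\end{equation*}
Reading off coefficients with $1/\sqrt{1 - 4z^2} = \sum_{n \ge 0} {2n \choose n} z^{2n}$ gives $[z^{2n + 1}] P = {2n \choose n}$ immediately, and $[z^{2n}] P = \frac{1}{2}{2n \choose n} = {2n - 1 \choose n}$ for $n \ge 1$ (using ${2n \choose n} = 2 {2n - 1 \choose n}$, which follows from Pascal's rule and symmetry), while $[z^0] P = 1 = {-1 \choose 0}$. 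The main obstacle is entirely in the grammar: correctly encoding the ``no turning at even heights'' constraint---in particular seeing that it forces ascents and descents to come in the $UU$/$DD$ pairs of the $O$ rule, while the special status of height $0$ lets $P$ admit the minimal arch $UD$ that $O$ cannot. Once that bookkeeping is right, everything else is mechanical.
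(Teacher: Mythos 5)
Your proposal is correct, but it proves the proposition by a genuinely different method than the paper does. Your grammar is right: at a positive even height the path must pass straight through (a turn there would be a forbidden peak or valley), which forces the doubled steps in the rule $O = \epsilon \,\cup\, UUODDO$ for paths based at odd height, while the first-return factor of a path in $P$ sits on base height $1$, where turns are legal, giving $P = \epsilon \,\cup\, UODP$. Both decompositions are genuine first-return parsings, hence unambiguous, and the algebra checks out: $O(z) = C(z^2)$, $P = 2z/\bigl(2z - 1 + \sqrt{1 - 4z^2}\bigr) = \tfrac{1}{2} + \tfrac{1 + 2z}{2\sqrt{1 - 4z^2}}$, and coefficient extraction (with $\binom{2n}{n} = 2\binom{2n-1}{n}$ for $n \geq 1$) yields exactly the claimed counts. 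The paper, by contrast, proves this particular proposition \emph{bijectively}, not grammatically: it notes that an admissible path is determined by its even-indexed steps (each even-indexed step forces the step after it, depending on direction and on whether the height is $0$), and from this builds an explicit bijection with unconstrained $\pm 1$ walks --- walks of length $2n$ from $0$ to $0$ for semilength $2n+1$, counted by $\binom{2n}{n}$, and walks of length $2n-1$ from $0$ to $-1$ for semilength $2n$, counted by $\binom{2n-1}{n}$. Each approach has its virtues: the paper's bijection explains the binomial coefficients directly, with no generating-function algebra or choice of square-root branch; your grammatical argument is more in the house style of the rest of the paper, is mechanical once the grammar is found, and produces the full closed-form generating function $P(z)$ as a bonus.
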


\begin{enumerate}[\hspace{1 cm}] 
	\item[1.] For any Dyck path, the first step must be up and the last step must be down to avoid negative height.

	\item[2.] The parity of the height at the $k$th step clearly equals the parity of $k$.
	\begin{enumerate}[\hspace{1 cm}] 
		\item[a.]  if the $2k^{th}$ step is up, the $(2k+1)^{th}$ step must also be up to avoid a peak with even height.  

		\item[b.] If the $2k^{th}$ step is down and the height is not zero, then the next step must also be down to avoid a valley with positive even height. 

		\item[c.] if the $2k^{th}$ step is down and the height is $0$, then the next step must be up to avoid a negative height.
	\end{enumerate}
\end{enumerate}

\begin{Proof}

    Let $D$ be the set of Dyck Paths with semi-length $2n+1$ such that no peak-height and no valley-height is a positive even number. By our note above, it is clear that each $d\in D$ is defined by the
    direction of its even numbered steps and its height at each of these steps,
    excluding its final step which must be down.  Since $d$ has length $4n+2$,
    there are $2n$ of these steps.

    Let $W$ be the set of all walks of length $2n$ with steps up and down such
    that the starting and ending height are both $0$. Note that $W$ consists of all permutations of $n$ up steps and $n$
    down-steps, and therefore $|W|=\binom{2n}{n}$. We define a bijection between $D$ and $W$ as follows.

    Working our way through $k=1\dots2n$: Given a walk $w \in W$, the height going from $-1$ to $0$ or from $0$ to
    $-1$ at the $k^{th}$ step corresponds to a Dyck path whose height is zero
    at the $2k^{th}$ step. (So the $2k^{th}$ step is down and the $(2k+1)^{th}$
    step is up). Otherwise, an increase in the absolute value of the height at the $k^{th}$
    step corresponds to a Dyck path whose $2k^{th}$  and $(2k+1)^{th}$ steps
    are both up, and a decrease corresponds to a Dyck path whose $2k^{th}$  and
    $(2k+1)^{th}$ steps are both down.

    Note that  the height can only go from $-1$ to $0$ at even steps and from
    $0$ to $-1$ at odd steps, so this is clearly injective. Its inverse is
    also injective since, when determining the $k^{th}$ step of the walk, given
    a Dyck path, we already know all the preceding steps in that walk.

Now, let $D$ be the set of Dyck Paths with semi-length $2n$ such that no peak-height and no valley-height is a positive even number. As in the previous proof, $d\in D$ is defined by the direction of its even numbered steps and its height at each of these steps, excluding its final step which must be down.  Since $d$ has length $4n$, there are $2n-1$ of these steps. 

Let $W$ be the set of all walks of length $2n-1$ with steps up and down such that the starting  height is $0$ and ending height is $-1$. Note that $W$ consists of all permutations of $n-1$ up steps and $n$ down-steps, and therefore $|W|=\binom{2n-1}{n}$. 

Here, we can define a bijection between $D$ and $W$ the same way that we defined it in the previous proof. Note that for $w\in W$, we start at $0$ and end at $-1$, so the number of steps away from the line between heights $0$ and $-1$ is still the same as the number of steps towards it, so its image ends at height $0$.  Moreover, there can never be more steps towards zero than away from zero (i.e. absolute value can never decrease more than it increases), so the image will never have negative height and thus is a Dyck path. As before, the image will never have a peak or valley height that is a positive even number.

In the other direction, $d\in D$ obviously maps to a walk of length $2n-1$ that starts at height $0$. Since $d$ starts and ends at height $0$, if we remove the first and last step of $d$ and split the remaining path into sub-paths of length 2, then $[1,1]$ appears the same number of times as $[-1,-1]$.  Thus, the ending height if the image of $d$ will either be $0$ or $-1$ (since the number of steps away from the line between $0$ and $-1$ equals the number of steps toward that line). Since $d$ has semi-length $2n-1$, $[-1,1]$ will occur an odd number of times in $d$. Therefore the image of $d$ will cross the line between $0$ and $-1$ an odd number of times, starting at $0$, and thus will end at $-1$.

Both maps are injective for the same reasons as before.
 
\end{Proof}

\section{Grammatical families}
\label{sec:grammar}

In this section we provide some explicit grammars for infinite families of
restricted Dyck paths. In many cases, such grammars
are guaranteed to exist. The reasoning in \cite{z} shows that, for
every set of Dyck paths whose peaks, valleys, and up- and down-runs avoid
specific arithmetic progressions, we may construct a finite, context-free
grammar which generates them. The method implied in \cite{z} to compute these
grammars gives no hint as to their \emph{form}, and this is what we try to
provide here.

Our first two results are about Dyck paths whose up-run lengths avoid a
fixed arithmetic progression $\{Ar + B \mid r \geq 0\}$. It turns out
that when $B < A$, there is a simple context-free grammar for such paths.
When $B \geq A$ the situation is more complicated, but we can derive a
``grammatical equation'' which again leads to a generating function.

\begin{Prop}
	Let $B < A$ be non-negative integers. The set $\mathcal{P}$ of Dyck
	paths whose up-run lengths avoid $\{Ar + B \mid r \geq 0\}$ has the
	unambiguous grammar
    \begin{equation*}
        \mathcal{P}= \bigcup_{\substack{0 \leq k < A \\ k \neq B}} U^k (D\mathcal{P})^k \cup U^A (\mathcal{P}D)^A \mathcal{P},
    \end{equation*}
    and therefore
    \begin{equation*}
        P(z) = \sum_{\substack{0 \leq k < A \\ k \neq B}} z^k P^k(z) + z^A P^{A + 1}(z),
    \end{equation*}
    where $P(z)$ is the weight-enumerator of $\mathcal{P}$.
\end{Prop}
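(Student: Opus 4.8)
The plan is to prove the grammar by classifying each path in $\mathcal{P}$ according to the length of its \emph{initial up-run} (the maximal run of $U$'s at the start), and then collapsing the infinitely many ``long initial run'' cases into the single recursive term $U^A(\mathcal{P}D)^A\mathcal{P}$. First I would establish a master decomposition, valid before any restriction is imposed: a Dyck path whose initial up-run has length exactly $k$ factors \emph{uniquely} as $U^k (Dw_1)(Dw_2)\cdots(Dw_k)$, where $w_1,\dots,w_k$ are Dyck paths. The bijection is the standard one obtained by marking the down-steps that first reach heights $k-1, k-2, \dots, 0$ after the initial ascent: these $k$ down-steps are the separators, and the blocks $w_i$ lying between them (at base heights $k-1,\dots,0$) are elevated Dyck paths. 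Uniqueness follows because both $k$ and the separator positions are forced by the path.

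The crucial point, which I would verify next, is \emph{up-run inheritance}: in $U^k(Dw_1)\cdots(Dw_k)$ each block $w_i$ is immediately preceded by a separator $D$ and followed by a $D$ or the end of the word, so no up-run of any $w_i$ can merge with the initial run or with a neighboring block. Hence the up-runs of the whole path are precisely the initial $U^k$ together with the up-runs of the $w_i$. Consequently the path lies in $\mathcal{P}$ if and only if $k$ is an allowed run length (i.e. $k\not\equiv B\pmod A$) \emph{and} each $w_i$ lies in $\mathcal{P}$. This yields the master grammar for the restricted set,
\begin{equation*}
\mathcal{P} = \bigcup_{\substack{k \geq 0 \\ k \not\equiv B \ (\mathrm{mod}\ A)}} U^k (D\mathcal{P})^k,
\end{equation*}
as an unambiguous, but infinite, union (the $k=0$ term being the empty path).

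To reach the stated finite grammar I would split the allowed indices into $0\le k<A$ (for which $k\not\equiv B$ is the same as $k\neq B$) and $k\ge A$, and then regroup the tail. For $k\ge A$ write $k=A+m$ with $m\ge 0$; since $A+m\equiv m\pmod A$, the index $k$ is allowed exactly when $m$ is, and as languages $U^{A+m}(D\mathcal{P})^{A+m} = U^A\bigl(U^m(D\mathcal{P})^m\bigr)(D\mathcal{P})^A$. Taking the union over allowed $m\ge 0$ and applying the \emph{same} master decomposition to the inner factor gives $\bigcup_{m\text{ allowed}}U^m(D\mathcal{P})^m=\mathcal{P}$, so the entire tail collapses to $U^A\mathcal{P}(D\mathcal{P})^A$, which is literally the same word-language as $U^A(\mathcal{P}D)^A\mathcal{P}$ (both are $U^A$ followed by $A+1$ Dyck blocks separated by $A$ down-steps). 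Combining with the finitely many short terms gives the claimed grammar, and since these pieces have distinct initial-up-run lengths ($<A$ versus $\ge A$) the union stays unambiguous; translating via the product and sum rules from the introduction, where a $U$--$D$ pair carries one factor of $z$, turns $U^k(D\mathcal{P})^k$ into $z^kP^k$ and the tail into $z^AP^{A+1}$, yielding the generating-function identity.

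I expect the main obstacle to be the regrouping step: it is self-referential, since collapsing the tail reuses the very decomposition being proved, so I must present it carefully as an identity of languages rather than a circular computation, and I must confirm unambiguity of $U^A(\mathcal{P}D)^A\mathcal{P}$ directly (the first block may itself begin with $U$'s, so one checks that the separators are still forced as the first descents to heights $A-1,\dots,0$). A secondary point needing attention is the boundary case $B=0$, where the forbidden set $\{0,A,2A,\dots\}$ nominally excludes the $k=0$ slot even though the empty path has no up-run at all; here I would either assume $B\ge 1$ or note that the empty path must be adjoined, since a ``run of length $0$'' imposes no genuine restriction.
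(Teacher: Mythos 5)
Your proposal is correct and follows essentially the same route as the paper: your master first-passage decomposition $U^k(D\mathcal{P})^k$ with run-inheritance is exactly the paper's parse, and your tail collapse $U^{A+m}(D\mathcal{P})^{A+m} = U^A\bigl[U^m(D\mathcal{P})^m\bigr](D\mathcal{P})^A$ is just a regrouped version of the paper's direct step writing $P = U^A W$ with $W \in (\mathcal{P}D)^A\mathcal{P}$, valid for the same reason you identify (since $B < A$, avoidance is a pure congruence condition on run lengths, so prepending $U^A$ changes nothing modulo $A$). Your closing remark on $B = 0$ flags a genuine edge case the paper silently glosses over---its grammar then omits the $k=0$ term and hence the empty path---so retaining that caveat (assume $B \geq 1$ or adjoin $\epsilon$) is a worthwhile refinement rather than a defect.
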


\begin{Proof}
    The grammar clearly uniquely parses the empty path, so suppose that
    $P \in \mathcal{P}$ has length $n > 0$. Then $P$ starts with a
    up-run of length $k > 0$ for some $k \not\equiv B \mod A$. If
    $k < A$, then write $P = U^k D W$,
    where $W$ is a walk from height $k-1$ to height $0$ with the same
    restrictions on up-runs as $P$. For $0 \leq i < k - 1$, let
    $D_i$ indicate the down-step in $W$ which hits the height $i$ for
    the first time. Then
    \begin{equation*}
    	W = P_{k-1} D_{k-2} P_{k-2} D_{k-3}... P_1 D_0 P_0,
    \end{equation*}
    where $P_i$ is a Dyck path shifted to height $i$ with the same
    restrictions on up-runs as $P$. This uniquely parses $P$ into the case
    $U^k (D \mathcal{P})^k$ in the grammar.

    If the initial up-run has length $k \geq A$, then write
    $P = U^A W$,
    where $W$ is a walk from height $A$ to height $0$ whose up-run lengths avoid
    $\{Ar + B \mid r \geq 0\}$.
    By argument analogous to the previous paragraph, we can decompose
    $W$ as
    \begin{equation*}
        W=P_A D_{A-1} P_{A-1} D_{A-2}... P_1 D_0 P_0,
    \end{equation*}
    where $P_i \in \mathcal{P}$. Thus $W$ is of the form $(\mathcal{P} D)^A \mathcal{P}$, and this uniquely parses $P$ into the final case of the grammar.
    
    We have shown that $\mathcal{P}$ is contained in the language generated by this grammar,
    and it is easy to see that the first $k$ cases of the grammar are contained
    in $\mathcal{P}$. The final case, $U^A (\mathcal{P} D)^A \mathcal{P}$ is
    also contained in the grammar, because concatenating $U^A$ to the beginning of a path does not
    change the length any of the up-runs modulo $A$. The different cases are clearly disjoint, so the grammar is also unambiguous.
\end{Proof}

\begin{Prop}
	Let $A \leq B$ be nonnegative integers. The set $\mathcal{P}$ of
	Dyck paths avoiding up-run lengths in $\{Ar + B \mid r \geq 0\}$
    satisfies the ``grammatical equation''
	\begin{equation*}
    	\mathcal{P} \cup U^B (D\mathcal{P})^B =
    	    \bigcup_{0 \leq k < A }
    	        U^k (D \mathcal{P})^k\ \cup\ U^A (\mathcal{P}D)^A \mathcal{P},
	\end{equation*}
    and therefore
    \begin{equation*}
        P(z) + z^B P(z)^B = \sum_{0 \leq k < A} z^k P^k(z) + z^A P^{A + 1}(z),
    \end{equation*}
    where $P(z)$ is the weight-enumerator of $\mathcal{P}$.
\end{Prop}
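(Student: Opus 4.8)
The plan is to adapt the unambiguous parsing from the previous proposition, but now to track a single boundary discrepancy that only appears once $B \geq A$. The key arithmetic observation is that an up-run length $\ell$ lies in $\{Ar + B \mid r \geq 0\}$ precisely when $\ell \equiv B \pmod A$ \emph{and} $\ell \geq B$. Consequently every length $k$ with $0 \leq k < A$ is allowed (which is why the union on the right no longer needs to exclude $k = B$), while prepending or stripping a block $U^A$ preserves residues modulo $A$ and can therefore only alter forbiddenness by carrying a length across the threshold $B$.

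First I would record that the grammar on the right-hand side is unambiguous. Its cases are separated by the length of the initial up-run: exactly $k$ in the case $U^k(D\mathcal{P})^k$ for $0 \leq k < A$, and at least $A$ in the case $U^A(\mathcal{P}D)^A\mathcal{P}$; and within each case the first-return decomposition at successive heights is unique, exactly as in the previous proof. Hence the right-hand side enumerates its language $R$ with generating function $\sum_{0 \leq k < A} z^k P^k(z) + z^A P^{A+1}(z)$.

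Next I would establish the set equality $\mathcal{P} \cup U^B(D\mathcal{P})^B = R$. For the inclusion $\mathcal{P} \subseteq R$, I parse a path by its initial up-run $k$: if $k < A$ it falls into $U^k(D\mathcal{P})^k$, and if $k \geq A$ I strip $U^A$ and decompose the remaining height-$A$-to-$0$ walk as $(\mathcal{P}D)^A\mathcal{P}$. The one thing to verify is that the first excursion, whose initial up-run drops from $k$ to $k - A$, still lies in $\mathcal{P}$: forbiddenness of $k-A$ would require both $k - A \equiv B \pmod A$ and $k - A \geq B$, but if $k \equiv B \pmod A$ then $k$'s being allowed forces $k < B$, whence $k - A < B$. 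For the reverse inclusion, a path generated by $U^k(D\mathcal{P})^k$ clearly lies in $\mathcal{P}$, while a path $p$ generated by $U^A(\mathcal{P}D)^A\mathcal{P}$ has initial up-run $A + j$, where $j$ is the (allowed) initial up-run of its first excursion; since the $D$-separators keep every other up-run confined to a single inner $\mathcal{P}$-factor, $p \in \mathcal{P}$ unless $A + j$ is itself forbidden.

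This last case is the crux, and the step I expect to be the main obstacle. Among values $j$ that are themselves allowed but for which $A + j$ is forbidden, the threshold description forces $j \equiv B \pmod A$ with $B - A \leq j < B$, pinning down the unique value $j = B - A$. For such $p$ the initial up-run is exactly $A + (B - A) = B$, and I would re-parse $p$ as $U^B(D\mathcal{P})^B$, checking that each resulting excursion is genuinely in $\mathcal{P}$ because none of its up-runs meets the leading $U^B$. This identifies $R \setminus \mathcal{P}$ with $U^B(D\mathcal{P})^B$; since every path in the latter has initial up-run $B$, which is forbidden, it is disjoint from $\mathcal{P}$, so the union on the left is disjoint with generating function $P(z) + z^B P(z)^B$. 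Equating the two enumerations of $R$ then yields the stated grammatical equation.
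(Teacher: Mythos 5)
Your proposal is correct and takes essentially the same route as the paper: parse by the length of the initial up-run, observe (via the same modular-plus-threshold arithmetic the paper uses) that a word of $U^A(\mathcal{P}D)^A\mathcal{P}$ fails to lie in $\mathcal{P}$ exactly when the first factor's initial up-run is $B-A$, and re-parse those words as $U^B(D\mathcal{P})^B$. The one step your ``identifies'' leaves implicit---the containment $U^B(D\mathcal{P})^B \subseteq U^A(\mathcal{P}D)^A\mathcal{P}$, which the paper proves by the explicit regrouping $U^B(D\mathcal{P})^B = U^A\{U^{B-A}(D\mathcal{P})^{B-A}\}(D\mathcal{P})^A$ with the braced factor lying in $\mathcal{P}$ because $B-A<B$---is precisely the inverse of your re-parse, so nothing essential is missing.
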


Note that the right-hand side is nearly identical to the previous claim;
the difference being that we can get paths in $U^B (D\mathcal{P})^B$, which we will show below.

\begin{Proof}
    If $P$ is a path in $\mathcal{P}$, then we can uniquely parse $P$
    into a case of the right-hand side by the same argument given in the
    previous proposition.
    \begin{align*}
        U^B (D \mathcal{P})^B &= U^A U^{B - A} (D \mathcal{P})^B \\
          &= U^A \{U^{B - A} (D \mathcal{P})^{B - A}\} (D \mathcal{P})^A \\
          &= U^A [\{U^{B - A} (D \mathcal{P})^{B - A}\} D (\mathcal{P} D)^{A - 1}] \mathcal{P}.
    \end{align*}
    The expression in brackets, $U^{B - A} (D \mathcal{P})^{B - A}$, is in
    $\mathcal{P}$, which shows that
    $ U^B (D \mathcal{P})^B$ is contained in $U^A (\mathcal{P}D)^A \mathcal{P}$.

    Conversely, it remains to show that the left-hand side is \emph{all}
    that the right-hand side can generate.  $\bigcup_{0 \leq k < A }
    	        U^k (D \mathcal{P})^k$  is contained in $\mathcal{P}$ as in the previous proposition. For $W \in U^A(\mathcal{P} D)^A \mathcal{P}$, write
    \begin{equation*}
        W = U^A P_1 D \dots  P_A D P_{A+1}.
    \end{equation*}
    Let $\ell$ be the length of the initial up-run in
    $P_1$. If $\ell \not\equiv B \pmod A$, then $W$ contains no up-runs of lengths
    in $\{Ar + B \mid r \geq 0\}$ and is a path in $\mathcal{P}$. If $\ell \equiv B \pmod A$, then $\ell \leq B-A$. If $\ell<B-A$ then the initial run of $W$ has length less than $B$. Thus, $W$ contains no up-runs of lengths in $\{Ar + B \mid r \geq 0\}$. For $\ell=B-A$, let $D_i$ denote the first time $W$ steps down to height $i$ for $A<i< B$ and write
    \begin{align*}
         W &=U^A P_1 D \dots  P_A D P_{A+1}\\
            &=U^A (U^{B-A} D_{B-1} W_{B-1} \dots D_A W_A) D P_2 D \dots  P_A D P_{A+1} \\
          &= U^B D_{B-1} W_{B-1} \dots D_A W_A D P_2 D \dots  P_A D P_{A+1}.
    \end{align*}
    $W_i$ is Dyck path shifted to height $i$ by the definition of $D_i$. Hence, $W \in U^B (D\mathcal{P})^B$.
\end{Proof}

\begin{Prop}
		Let $A,B \in \Z_{\geq 0}$ such that $B<A$. The set $\mathcal{P}$ of Dyck paths avoiding down-run lengths in $\{Ar+B|r \in \Z_\geq{0}\}$ has the unambiguous grammar
		\[\mathcal{P}= \{EmptyPath\}\cup \underset{\underset{k \neq B}{1\leq k <A }}{\bigcup} 
		(U\mathcal{P})^{k-1} U D^k \mathcal{P} \cup (U\mathcal{P})^A D^A \mathcal{P}, \]
    and therefore
    \begin{equation*}
        P(z) = 1 + \sum_{\substack{0 \leq k < A \\ k \neq B}} z^k P^k(z) + z^A P^{A + 1}(z),
    \end{equation*}
    where $P(z)$ is the weight-enumerator of $\mathcal{P}$.
\end{Prop}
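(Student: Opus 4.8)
The plan is to prove the grammar directly, dualizing the proof of the $B < A$ up-run proposition rather than trying to reuse it through the reversal $U \leftrightarrow D$. Reversing a Dyck path and swapping the two letters does convert down-run restrictions into up-run restrictions, but applying this reversal to the grammar of that proposition produces a grammar of a superficially different shape (with the trailing excursion moved to the front, roughly $(\mathcal{P}U)^k D^k$ rather than $(U\mathcal{P})^{k-1}UD^k\mathcal{P}$); it generates a \emph{different}, though equally correct, set of strings. Since the grammar as stated is keyed to the \emph{first arch} of a path, I would argue from that decomposition directly, inducting on length. The base case is immediate: the empty path is parsed by the EmptyPath term and by nothing else.

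For a nonempty $P \in \mathcal{P}$, I would first split off the first arch, writing $P = \gamma\beta$ where $\gamma$ is the prefix ending at the first return to height $0$ and $\beta$ is the remainder. Since $\beta$ begins at height $0$ it starts with $U$ (or is empty), so no down-run straddles the cut; thus $\beta$ is a shorter member of $\mathcal{P}$, handled by induction (this is the trailing $\mathcal{P}$). Next, let $D^k$ be the final down-run of $\gamma$, so $\gamma = \alpha D^k$ with $\alpha$ a lattice path from $0$ to $k$ that stays at height $\geq 1$ after its first step and whose last step is $U$. I would then show $\alpha = (U\mathcal{P})^{k-1}U$ by a last-passage decomposition: cutting $\alpha$ at the final visit to each of the heights $1, \dots, k-1$ isolates $k$ ``structural'' up-steps (each following a last visit, hence forced to go up) separated by excursions $\mathcal{P}_1, \dots, \mathcal{P}_{k-1}$ at those heights, each flanked by $U$'s and therefore a genuine member of $\mathcal{P}$. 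Since $P$ avoids the forbidden down-run lengths, $k \notin \{Ar+B\}$, which for $k < A$ means exactly $k \neq B$, placing $P$ in the term $(U\mathcal{P})^{k-1}UD^k\mathcal{P}$.

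The case $k \geq A$, where the single term $(U\mathcal{P})^A D^A\mathcal{P}$ must absorb arbitrarily long descents, is where I expect the main obstacle. Here I would split the run as $D^k = D^{k-A}D^A$ at height $A$ and regroup the tail of $\alpha$ with $D^{k-A}$ into one excursion $R := \mathcal{P}_A U \mathcal{P}_{A+1}\cdots U \mathcal{P}_{k-1} U D^{k-A}$ at height $A$, turning $P$ into $(U\mathcal{P})^A D^A \beta$; since $R$ has final down-run of length $k-A \equiv k \not\equiv B \pmod A$ and all its other down-runs sit inside $\mathcal{P}$-excursions, it is a shorter member of $\mathcal{P}$ dispatched by induction. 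The dual and more delicate half is \emph{soundness}: for a generic word in the $A$-term the explicit block $D^A$ may merge with the final down-run of the preceding excursion $Q \in \mathcal{P}$; writing that length as $\ell \not\equiv B \pmod A$, the merged run has length $\ell + A \equiv \ell \not\equiv B \pmod A$, so the block $D^A$ preserves down-run lengths modulo $A$ — exactly dual to the ``concatenating $U^A$'' observation in the up-run proposition. (When $Q$ is empty this run has length exactly $A \equiv 0$, so the argument tacitly needs $B \neq 0$, the same edge case implicit in the up-run proposition.)

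Finally I would record unambiguity and the generating function. The first arch, its final down-run length $k$, and the last-passage cuts are all invariants of $P$, so each path is parsed in exactly one way and the terms are pairwise disjoint (the short terms are separated by the value $k < A$, and the $A$-term is precisely the case $k \geq A$). Applying the homomorphism $GF(A \cup B) = GF(A) + GF(B)$ and $GF(AB) = GF(A)GF(B)$ term by term — EmptyPath contributing $1$, each block $(U\mathcal{P})^{k-1}UD^k\mathcal{P}$ contributing $z^k P^k$, and $(U\mathcal{P})^A D^A\mathcal{P}$ contributing $z^A P^{A+1}$ — then yields the stated equation for $P(z)$.
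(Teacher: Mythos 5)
Your proof is correct and follows essentially the same route as the paper's: decompose at the first return to height $0$, let $k$ be the length of the down-run ending there, cut the ascent at its last passages through each height to produce the $(U\mathcal{P})$ blocks (your regrouped excursion $R$ is exactly the paper's $P_A$), and establish soundness of the $(U\mathcal{P})^A D^A \mathcal{P}$ term by observing that appending $D^A$ preserves down-run lengths modulo $A$. Your parenthetical catch that this last step tacitly requires $B \neq 0$ is a genuine edge case the paper glosses over --- for $B = 0$ the word $U^A D^A$ is generated by the $A$-term yet has a forbidden down-run of length $A \equiv B \pmod{A}$ --- so your version is, if anything, slightly more careful than the original.
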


\begin{Proof} 
   It is obvious that the grammar uniquely parses the empty path, so let $P \in \mathcal{P}$ have length $n>0$. Let $D_0$ denote the first time $P$ returns to height $0$ and let $k$ be the length of the descending run in $P$ ending with the step $D_0$. Let $D_{k-1}...D_0$ denote this descending run.
   
   If $k<A$, then write $P=UWUD_{k-1}...D_0 P_0$. It is clear that $W$ is a walk from height $1$ to height $k-1$ and $P_0$ is a Dyck path, where both $W$ and $P_0$ have the same restrictions on descending runs as $P$.  Thus, $P_0 \in \mathcal{P}$ and, letting $U_i$ indicate the last up-step from height $i$ in $W$, we have
   \[W=P_1 U_1 P_2 U_2...P_{k-2}U_{k-2}P_{k-1}.\]
   By the definition of $U_{i}$, $P_j$ is a Dyck path shifted to height $j$ with the same restrictions on descending runs as $P$. This uniquely parses $P$ into the case $(U\mathcal{P})^{k-1} U D^k \mathcal{P}$.
   
   If the first descending run in $P$ that hits height zero has length $k \geq A$, then write \[ P=W D_{A-1}...D_0 P_0. \]
It is obvious that $P_0 \in \mathcal{P}$, and $W$ is a walk from height $0$ to height $A$ which never returns to height $0$. By an argument analogous to the previous paragraph, we can decompose $W$ as
\[W=U_0 P_1 U_1 P_2 U_2 P_3.. U_{A-1}P_{A},\]
 where $P_i \in \mathcal{P}.$ Thus, $W$ is of the form $(U \mathcal{P})^A$ and $P$ is uniquely parsed into the case $(U \mathcal{P})^A D^A \mathcal{P}$.
 
 This proves that $\mathcal{P}$ can be generated by the given grammar. It is clear that $(U\mathcal{P})^{k-1} U D^k \mathcal{P}$ is contained in $\mathcal{P}$ for $1 \leq k < A $ and $k\neq B$. $(U \mathcal{P})^A D^A \mathcal{P}$ is also contained in $\mathcal{P}$, since concatenating $D^A$ to a Dyck path in $\mathcal{P}$ does not change the length of any down-runs modulo $A$. The different cases defined on the right-hand side are clearly disjoint, so the grammar is unambiguous.
	
\end{Proof}

\begin{Prop}
		Let $A,B \in \Z_{\geq 0}$ such that $B\geq A$.  The set $\mathcal{P}$ of Dyck paths avoiding down-run lengths in $\{Ar+B|r \in \Z_\geq{0}\}$ satisfies the grammatical equation
		\[\mathcal{P} \cup (U\mathcal{P})^{B-1}U D^B \mathcal{P}	= \{EmptyPath\} \cup {\bigcup}_{1\leq k <A } 	(U\mathcal{P})^{k-1} U D^k \mathcal{P} \cup (U\mathcal{P})^A D^A \mathcal{P}. \]
    and therefore
    \begin{equation*}
        P(z) + z^B P^B(z) = 1 + \sum_{0 \leq k < A} z^k P^k(z) + z^A P^{A + 1}(z).
    \end{equation*}
    where $P(z)$ is the weight-enumerator of $\mathcal{P}$.
\end{Prop}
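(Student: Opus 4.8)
The plan is to prove the set identity by two inclusions and then read off the generating-function equation from the (unambiguous) decomposition, exactly as in the two preceding propositions. The whole argument mirrors the $A \le B$ up-run proposition, but with runs read off the \emph{end} of a path rather than its beginning. The key mechanism is that appending the block $D^A$ to the final Dyck path in $(U\mathcal{P})^A D^A \mathcal{P}$ lengthens that path's terminal down-run by $A$, which produces a forbidden down-run of length $B$ precisely when the terminal down-run had length $B-A$. I expect those boundary paths to be exactly the extra term $(U\mathcal{P})^{B-1} U D^B \mathcal{P}$, which is why it must be added on the left-hand side.

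For the inclusion of the left-hand side into the right, I would first handle $\mathcal{P}$ itself by the \emph{first-return-to-zero} decomposition used in the previous proposition: for nonempty $P \in \mathcal{P}$, let $k$ be the length of the descending run that first brings $P$ back to height $0$. If $k < A$ this parses $P$ into $(U\mathcal{P})^{k-1} U D^k \mathcal{P}$ --- and here $k \ne B$ is automatic, since $k < A \le B$ --- while $k \ge A$ parses $P$ into $(U\mathcal{P})^A D^A \mathcal{P}$. For the extra term I would show $(U\mathcal{P})^{B-1} U D^B \mathcal{P} \subseteq (U\mathcal{P})^A D^A \mathcal{P}$ by the regrouping $D^B = D^{B-A} D^A$, recognizing the middle factor $P_A\, U P_{A+1} \cdots U P_{B-1}\, U D^{B-A}$ as a single Dyck path whose only new down-run has length $B-A < B$ and therefore lies in $\mathcal{P}$; this is the down-run analogue of the bracketed computation in the $A \le B$ up-run proof.

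The substantive direction is the reverse inclusion of $(U\mathcal{P})^A D^A \mathcal{P}$. Writing $W = U P_1 \cdots U P_A\, D^A P_{A+1}$ with each $P_i \in \mathcal{P}$, let $\ell$ be the length of the terminal down-run of $P_A$; since $P_A \in \mathcal{P}$ we have $\ell \notin \{Ar + B\}$. Concatenating $D^A$ extends this run to length $\ell + A$, so I would split into cases on the residue of $\ell$ modulo $A$: unless $\ell = B-A$, the new length $\ell + A$ again avoids $\{Ar + B\}$ (it either keeps a non-$B$ residue, or stays strictly below $B$), and no other down-run of $W$ is affected, so $W \in \mathcal{P}$. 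The critical case is $\ell = B - A$, where $\ell + A = B$ is forbidden.

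In that critical case I would strip the terminal $D^{B-A}$ off $P_A$ and merge it with $D^A$ into $D^B$, so that $W = V\, D^B P_{A+1}$ where $V$ is the prefix ending just before the $D^B$. The point is that $V$ runs from height $0$ to height $B$ and, after its first step, never returns to $0$ (all intervening excursions sit at heights $\ge 1$), and it ends on an up-step. A walk from $0$ to $B$ that never revisits $0$ decomposes uniquely as $(U\mathcal{P})^B$ via its last-passage up-steps, and since $V$ terminates on an up-step the final excursion is empty, giving $V = (U\mathcal{P})^{B-1} U$; all down-runs of $V$ are internal to the original blocks and so avoid $\{Ar + B\}$, making every factor genuinely a member of $\mathcal{P}$. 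Hence $W \in (U\mathcal{P})^{B-1} U D^B \mathcal{P}$, completing the inclusion. I expect this last-passage re-decomposition --- together with the bookkeeping that guarantees no spurious forbidden run is created or destroyed during the regrouping --- to be the main obstacle; everything else follows the template already established. Finally, the two pieces of the left side are disjoint (members of $(U\mathcal{P})^{B-1} U D^B \mathcal{P}$ carry a length-$B$ down-run and are thus excluded from $\mathcal{P}$) and the right side is parsed unambiguously by first-return run length, so reading off weights --- $1$ for the empty path, $z^k P^k$ for the case indexed by $k$, and $z^A P^{A+1}$ and $z^B P^B$ for the two longer cases --- yields the stated generating-function equation.
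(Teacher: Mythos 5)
Your proposal is correct and takes essentially the same approach as the paper's proof: the same first-return parse of $\mathcal{P}$, the same regrouping $D^B = D^{B-A}D^A$ to absorb $(U\mathcal{P})^{B-1}UD^B\mathcal{P}$ into $(U\mathcal{P})^A D^A\mathcal{P}$, and the same case analysis on the terminal down-run length $\ell$ of $P_A$, with the critical case $\ell = B-A$ handled by a last-passage (last up-step from each height) decomposition. If anything, your explicit bookkeeping that the maximal down-runs survive the regrouping, and your disjointness remark for the two left-hand terms, is slightly more careful than the paper's own write-up.
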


Note that the right-hand side is nearly identical to that of the previous claim -- the difference being that we can get paths in $(U \mathcal{P})^{B-1}U D^B \mathcal{P}$, which we will show below.

\begin{Proof}  
If $P$ is a path in $\mathcal{P}$, then we can uniquely parse $P$ into a case of the right hand side following the same argument given in the proof of Proposition 9.  Note that
\begin{align*}
(U \mathcal{P})^{B-1}U D^B \mathcal{P} &= (U \mathcal{P})^{A}
(U \mathcal{P})^{B-A-1}U D^{B-A} D^A \mathcal{P}\\
         &=(U \mathcal{P})^{A-1} U \{\mathcal{P} (U \mathcal{P})^{B-A-1} U D^{B-A} \} D^A \mathcal{P} 
         \end{align*}
         and the expression in brackets, $\mathcal{P} (U \mathcal{P})^{B-A-1} U D^{B-A}$, is contained in $\mathcal{P}$. Thus, any path in $(U \mathcal{P})^{B-1}U D^B \mathcal{P}$ is uniquely parsed into the case $(U \mathcal{P})^A D^A \mathcal{P}$. 
         
Thus, the left-hand side of the equation is generated by the right-hand side. The different cases defined on the right-hand side are also clearly disjoint.  It remains to show that all paths generated by the right-hand side are contained in the left-hand side.  It is clear that $(U\mathcal{P})^{k-1} U D^k \mathcal{P}$ is contained in $\mathcal{P}$ for $1 \leq k < A$.  For $W \in (U \mathcal{P})^A D^A \mathcal{P}$,
\[W=UP_1 U P_2 ... U P_A D^A P_{0}.\]
Let $\ell$ be the length of the last down-run in $P_A$. If $\ell \not \equiv B (\mod A)$, then $W$ contains no down-runs of lengths in $\{Ar+B|r \in \Z_\geq{0}\}$ and $W \in \mathcal{P}$. If $\ell \equiv B (\mod A)$, then $\ell \leq B-A$. When $\ell < B-A$, the corresponding down-run in $W$ has length < $B$, and again $W$ contains no down-runs of lengths in $\{Ar+B|r \in \Z_\geq{0}\}$. For $\ell = B-A$, write
\[P_A= W_A U_A W_{A+1} U_{A+1} W_{A+2}...W_{B}  U_{B} D^{B-A},\]
where $U_i$ is the last up-step from height $i$ in $P_A$, and thus $W_{i}$ is a Dyck path shifted to height $i.$
\begin{align*}
W&= UP_1 U P_2 ... U P_{A-1} U P_A D^A P_{0} \\
 &=UP_1 U P_2 ... U P_{A-1} U (W_A U_A... W_{B}U_{B} D^{B-A}) D^A P_{0}\\
 &= UP_1 U P_2 ... U P_{A-1} U W_A U_A... W_{B}U_{B} D^B P_{0}
         \end{align*}
and, hence, $W \in (U \mathcal{P})^B D^B \mathcal{P}.$ 
\end{Proof}

\begin{Prop}
    Let $r\in Z^+$. The set $\mathcal{P}$ of Dyck paths avoiding ascending and descending runs of lengths $L\in \{1,...,r\}$ satisfies the grammatical equation
    \begin{equation*}
        \mathcal{P} \cup UD \mathcal{P} =
            \{EmptyPath\} \cup U^{r+1}D^{r+1}\mathcal{P} \cup U\mathcal{P} D \mathcal{P}.
    \end{equation*}
    and therefore
    \begin{equation*}
        P(z) + zP(z) = 1 + z^{r + 1} P(z) + z P^2(z),
    \end{equation*}
    where $P(z)$ is the weight-enumerator of $\mathcal{P}$.
	
\end{Prop}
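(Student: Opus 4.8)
The plan is to prove the displayed identity as an equality of \emph{sets}, namely $\mathcal{P} \cup UD\mathcal{P} = \{\text{EmptyPath}\} \cup U^{r+1}D^{r+1}\mathcal{P} \cup U\mathcal{P}D\mathcal{P}$, and then to observe that the union on each side is disjoint and each concatenation is unambiguous, so that the stated generating-function equation follows by the usual translation (here $UD$ and the explicit $U,D$ in $U\mathcal{P}D\mathcal{P}$ each contribute a factor $z$, while $U^{r+1}D^{r+1}$ contributes $z^{r+1}$). The structure mirrors the two earlier ``grammatical equation'' propositions: I would show that each side is contained in the other. It is convenient to record at the outset that a path belongs to $\mathcal{P}$ precisely when every maximal up-run and every maximal down-run has length at least $r+1$; in particular a \emph{nonempty} path of $\mathcal{P}$ opens with $U^{r+1}$ and closes with $D^{r+1}$.

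For the containment of the right-hand side in the left-hand side I would argue term by term. The empty path lies in $\mathcal{P}$. Prepending $U^{r+1}$ and appending $D^{r+1}$ to a path of $\mathcal{P}$ leaves every run of length at least $r+1$, so $U^{r+1}D^{r+1}\mathcal{P} \subseteq \mathcal{P}$. Finally, write an element of $U\mathcal{P}D\mathcal{P}$ as $UP_1DP_2$ with $P_1,P_2 \in \mathcal{P}$: if $P_1$ is empty the word is $UDP_2 \in UD\mathcal{P}$; and if $P_1$ is nonempty, then the opening $U$ lengthens $P_1$'s initial up-run (already $\geq r+1$) and the returning $D$ lengthens $P_1$'s final down-run, so all runs remain $\geq r+1$ and the word lies in $\mathcal{P}$. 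This is the same bookkeeping used in the preceding propositions.

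The reverse containment is where the work lies. Since $UD\mathcal{P} = U\{\text{EmptyPath}\}D\mathcal{P} \subseteq U\mathcal{P}D\mathcal{P}$, it suffices to show $\mathcal{P} \subseteq$ (right-hand side). Given a nonempty $P \in \mathcal{P}$, I would take its unique first-return decomposition $P = UP_1DP_2$. One checks immediately that $P_2 \in \mathcal{P}$, since the runs of $P_2$ are runs of $P$ (the returning $D$ cannot merge into $P_2$, which is empty or opens with $U$). The heart of the matter is a dichotomy for $P_1$: either $P_1 \in \mathcal{P}$, whence $P \in U\mathcal{P}D\mathcal{P}$, or else $P_1 = U^rD^r$, whence $P = U^{r+1}D^{r+1}P_2 \in U^{r+1}D^{r+1}\mathcal{P}$.

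Establishing this dichotomy is the main obstacle, and it is exactly where the nonnegativity of Dyck paths is used. Every run interior to $P_1$ coincides with a run of $P$, hence has length $\geq r+1$; only two runs of $P_1$ can be shorter, namely its initial up-run (which is $P$'s initial up-run shortened by the leading step, of length $j-1$ where $j \geq r+1$ is $P$'s initial up-run) and its final down-run (which, together with the returning $D$, forms a single run of $P$ and so has length $\geq r$). Hence $P_1 \notin \mathcal{P}$ forces one of these to have length exactly $r$. I would then use positivity to collapse both cases to $P_1 = U^rD^r$: if the initial up-run of $P_1$ has length $r$ (so $j=r+1$), the walk reaches height $r+1$ after that run, and any interior down-run following it would have length $\geq r+1$ and drop the walk to height $\leq 0$, impossible since $P_1$ never dips below height $1$, so $P_1$ must descend straight to its endpoint as $U^rD^r$; and a final down-run of length $r$ with $j \geq r+2$ is impossible, because it is entered at a peak of height $r+1$ reached by an \emph{interior} up-run (the initial one reaches height $j \geq r+2$), which again has length $\geq r+1$ and so would be preceded by a valley at height $\leq 0$. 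Combining the cases gives $\mathcal{P} \subseteq$ (right-hand side). Finally I would record disjointness: $\mathcal{P} \cap UD\mathcal{P} = \emptyset$ because every path of $UD\mathcal{P}$ opens with a unit up-run, and $U^{r+1}D^{r+1}\mathcal{P} \cap U\mathcal{P}D\mathcal{P} = \emptyset$ because the first-return factor of a path in the former is $U^rD^r \notin \mathcal{P}$; together with the uniqueness of the first-return decomposition this makes both unions unambiguous and yields the generating-function equation.
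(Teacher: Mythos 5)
Your proof is correct and takes essentially the same approach as the paper's: both hinge on the first-return decomposition $P = UP_1DP_2$ together with the same positivity arguments showing that either $P_1 \in \mathcal{P}$ or $P_1 = U^rD^r$ (the paper packages this as a case split on the initial ascending run length $\ell = r+1$ versus $\ell > r+1$, which is exactly your dichotomy), followed by the same containment and disjointness checks. If anything, your write-up is slightly more explicit than the paper's about why the short-initial-run case forces the immediate descent $D^{r+1}$ and about the unambiguity needed to justify the generating-function equation.
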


\begin{Proof}
	If $P\in \mathcal{P}$ is the empty path, then the grammar uniquely parses $P$. Otherwise, $P\in \mathcal{P}$ must begin with an ascending run of length $\ell >r$. If $\ell=r+1$, then clearly $U^{r+1}$ must be immediately followed by the descending run $D^{r+1}$, and $P$ is uniquely parsed into the case $U^{r+1}D^{r+1} \mathcal{P}$.

    If $\ell>r+1$, then let $D_0$ denote the step where $P$ returns to height $0$ for the first time and write
    \[P=U P_1 D_0 P_2.\]
    It is obvious that $P_2 \in \mathcal{P}$ and
    $P_1$ is a Dyck path shifted to height 1. By restrictions on $P$, the final descending run in $P_1$ must have length $L \geq r$. If $L=r$ then the preceding ascending run ends at height $r+1$. But the ascending runs in $P$ must have length of at least $r+1$, and hence $P_1$ hits height $0$, contradicting the definition of $D_0$. From here, it is clear that $P_1$ has the same restrictions on ascending and descending runs as $P$. Thus, $P$ is uniquely parsed into the case $U \mathcal{P} D \mathcal{P}$. 
    
    Since it is trivial that $UD\mathcal{P}$ is contained in $U\mathcal{P} D \mathcal{P}$, we have shown that the left-hand side of the given equation is generated by the right-hand side. It is also obvious that the cases defined on the right-hand side are disjoint and that
   $ \{EmptyPath\} \cup U^{r+1}D^{r+1} \mathcal{P}$ is contained in $\mathcal{P}$. A path $UP_1DP_2\in U \mathcal{P}D\mathcal{P}$ is contained in $UD\mathcal{P}$ if $P_1$ is the empty path and $\mathcal{P}$ otherwise. Thus, $\mathcal{P}$ satisfies the given grammatical equation.

\end{Proof}

\begin{Prop}
    Let $m,n \in \Z^+$. The set $\mathcal{P}$ of Dyck paths avoiding ascending runs of lengths in $\{1,...,m\}$ and descending runs of lengths in $\{1,...,n\}$ satisfies the "grammatical equation"
        \begin{equation*}
            \mathcal{P}  \cup UD\mathcal{P} = \{EmptyPath\} \cup U\mathcal{P}D \mathcal{P} \cup U^{m+1}D^{n+1}(\mathcal{P}D)^{m-n} \mathcal{P}, \text{ if $m \geq n$} \tag{1}
        \end{equation*}
        
        \begin{equation*}
        \mathcal{P}  \cup UD\mathcal{P} = \{EmptyPath\} \cup U\mathcal{P}D \mathcal{P} \cup
        (U \mathcal{P} )^{n-m} U^{m+1}  D^{n+1} \mathcal{P}  , \text{ if $m \leq n.$} \tag{2}
        \end{equation*}
\end{Prop}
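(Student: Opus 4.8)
The plan is to prove both equations exactly as the previous two propositions were proven: exhibit each right-hand side as an unambiguous grammar, show that every $P \in \mathcal{P}$ parses uniquely into one case on the right, check that each case lies in $\mathcal{P} \cup UD\mathcal{P}$, and observe that the paths contributed beyond $\mathcal{P}$ are precisely those in $UD\mathcal{P}$. Two structural facts about $\mathcal{P}$ will be used throughout: every ascending run has length $\geq m+1$ and every descending run has length $\geq n+1$; and since the descending run leaving any peak has length $\geq n+1$ yet cannot pass below $0$, every peak sits at height $\geq n+1$ (so the initial up-run of a nonempty path has length $\geq n+1$). The empty-path case and the inclusion $UD\mathcal{P} \subseteq U\mathcal{P}D\mathcal{P}$ (take the inner path empty) are immediate. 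Because any word beginning $UD$ has an ascending and a descending run of length $1$, it can never lie in $\mathcal{P}$; thus $UD\mathcal{P}$ is exactly the ``inner-empty'' slice of $U\mathcal{P}D\mathcal{P}$, which is what makes the left-hand side $\mathcal{P} \cup UD\mathcal{P}$ rather than $\mathcal{P}$ alone.

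For $m \geq n$ (equation (1)) I would split a nonempty $P$ on the length $\ell \geq m+1$ of its initial up-run. When $\ell > m+1$, the first-return decomposition $P = U P_1 D_0 P_2$ lands in $U\mathcal{P}D\mathcal{P}$: the block $P_1$ begins $U^{\ell-1}$ with $\ell - 1 \geq m+1$, and the point is that its final descending run is automatically $\geq m+1 \geq n+1$, since the up-run feeding $P_1$'s last peak has length $\geq m+1$ and starts no lower than $P_1$'s floor. When $\ell = m+1$, I would parse $P$ into $U^{m+1}D^{n+1}(\mathcal{P}D)^{m-n}\mathcal{P}$ by decomposing the walk from height $m+1$ down to $0$: its initial descending run has length $\geq n+1$, forcing the prefix $D^{n+1}$, and the first-passage decomposition of the remaining walk from height $m-n$ to $0$ supplies the blocks of $(\mathcal{P}D)^{m-n}\mathcal{P}$, each of which again has final descending run $\geq m+1 \geq n+1$ and hence genuinely lies in $\mathcal{P}$.

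For $m \leq n$ (equation (2)) the initial up-run is no longer the right thing to cut on, as it is always $\geq n+1$; instead I would split on the length $d \geq n+1$ of the descending run $D_0$ that first returns $P$ to height $0$. When $d \geq n+2$, the decomposition $P = U P_1 D_0 P_2$ again lands in $U\mathcal{P}D\mathcal{P}$ with $P_1 \in \mathcal{P}$ (its final run has length $d - 1 \geq n+1$, and its initial up-run has length $\geq m+1$, indeed $\geq n+1$, since a first return with $d \geq n+2$ forces the first peak above height $n+1$). When $d = n+1$ the return comes off a peak at height exactly $n+1$; writing $W$ for the portion of $P$ before that final $D^{n+1}$, I would decompose $W$ by first-passage up-steps and establish two things. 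First, the up-run climbing into the peak has length $\geq m+1$, which forces the first-passage excursions at the top $m$ heights to be empty and so yields the contiguous block $U^{m+1}$. Second, each lower excursion $B_i$ at height $i \leq n-m$ lies in $\mathcal{P}$, because its first peak being $\geq n+1$ forces its initial up-run to have length $\geq n+1-i \geq m+1$. Together these give $W = (U\mathcal{P})^{n-m}U^{m+1}$ and the parse $(U\mathcal{P})^{n-m}U^{m+1}D^{n+1}\mathcal{P}$.

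I expect the main obstacle to be the boundary bookkeeping on run lengths: when a sub-path is extracted, its first up-run or last descending run can be silently lengthened by an adjacent first-passage step, so I must confirm that the sub-path's \emph{own} runs still avoid the forbidden lengths. The whole argument hinges on how the two bounds interact at such a boundary---an up-run of length $\geq m+1$ into a peak forces the adjoining descending run to be long, and dually---and the direction of the inequality $m \geq n$ versus $m \leq n$ is precisely what decides whether it is the initial up-run or the first-return descending run that can fail to extract cleanly. That dichotomy is what forces the two different special terms, and checking that the corresponding case splits partition $\mathcal{P}$ disjointly (so the grammar is genuinely unambiguous) is the crux. The generating-function identities then follow from the rules $GF(A \cup B) = GF(A) + GF(B)$ and $GF(AB) = GF(A)\,GF(B)$, noting that $UD\mathcal{P}$ contributes $zP(z)$.
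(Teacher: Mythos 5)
Your proposal is correct and follows essentially the same route as the paper's proof: the same case split (initial ascending-run length $\ell = m+1$ versus $\ell > m+1$ when $m \geq n$; first-return descending-run length $n+1$ versus larger when $m \leq n$), the same first-return and first-passage decompositions yielding the special terms $U^{m+1}D^{n+1}(\mathcal{P}D)^{m-n}\mathcal{P}$ and $(U\mathcal{P})^{n-m}U^{m+1}D^{n+1}\mathcal{P}$, and the same observation that $UD\mathcal{P}$ is exactly the inner-empty slice of $U\mathcal{P}D\mathcal{P}$. Your preliminary lemma that every peak sits at height $\geq n+1$ is just a cleaner packaging of the boundary run-length checks the paper performs locally (e.g.\ that a sub-path's final descending run is at least as long as the ascending run preceding it).
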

 
 \begin{Proof}
 	We have already shown that this statement is true for $m=n$. Suppose $m>n$. If $P\in\mathcal{P}$ is the empty path, then the grammar uniquely parses $P$. Otherwise, $P$ must begin with an ascending run of length $\ell > m$.  If $\ell=m+1$ then $U^{m+1}$ is followed by a descending chain of length of at least $n+1$. Let $D_i$ denote the first time $P$ returns to height $i$ for $0 \leq i \leq m-n-1$, and write
 	\[P=U^{m+1}D^{n+1} P_{m-n} D_{m-n-1}...P_1 D_0 P_0.\]
 	It is obvious that $P_i$ is a Dyck path, shifted to height $i$, that has the same restrictions on ascending runs and descending runs (with the exception of the final descending run) as $P$. Since $P_i$ is a Dyck path, its final descending run must be at least as long as the ascending run preceding it. Thus, $P_i$ is either the empty path or ends with a descending run of length $L >m >n$. Thus, $P$ is uniquely parsed into the case $U^{m+1}D^{n+1} (\mathcal{P} D)^{m-n} \mathcal{P}.$
 	
 	If $\ell > m+1$ then, letting $D_0$ denote the first time $P$ returns to height $0$, write
  \[P=U P_1 D_0 P_0.\]
  Clearly, $P_0\in \mathcal{P}$, and $P_1$ is a Dyck path shifted to height $1$ and has the same restrictions on ascending runs as $P$. Using the same argument as for $P_i$ in the previous case, the descending runs in $P_1$ also have the same restrictions as $P$. This uniquely parses $P$ into the case $U\mathcal{P}D \mathcal{P}$. Finally, it is obvious that $UD\mathcal{P}$ is contained in $U \mathcal{P} D\mathcal{P}$, so the left-hand side of $(1)$ is generated by the right-hand side. 

  Now, suppose $m<n$ and let $P$ be an element in $\mathcal{P}.$ Let $L$ denote the length of the descending run where $P$ returns to height $0$ for the first time. If $L=n+1$, then write
  \[P= W U^{m+1} D^{n+1} P_0,\]
  where $W$ is a walk from height $0$ to $n-m$ with the same restrictions on ascending and descending runs as $P$ and $P_0 \in \mathcal{P}$. Decomposing $W$ and letting $U_i=$ denote the last time $W$ leaves height $i$, write
  \[W= U_0 P_1 U_1 P_2 ... U_{m-n-1} P_{m-n}.\]
  Then, for all $i$, $P_i$ is clearly a Dyck path with the same restrictions on descending runs and ascending runs (with the exception of the first run) as $P$. The first ascending run in $P_i$ must be longer than the descending run that follows it, which has length of at least $n+1>n$. Thus, $P_i \in \mathcal{P}$ and $P$ has the grammar $(U\mathcal{P})^{n-m} U^{m+1}D^{n+1}\mathcal{P}$.
  
  If $L>n+1$, then write
  \[P=U P_1 D P_2\],
  where $D$ denotes the first time $P$ returns to height $0$. Then, using the same argument as we gave when $\ell>m+1$, $P$ is parsed into the case $U \mathcal{P} D \mathcal{P}.$ Since $UD\mathcal{P}$ is obviously contained in $U\mathcal{P} D \mathcal{P}$, we have shown that the left-hand side of $(2)$ is generated by the right-hand side.
  
  In both $(1)$ and $(2)$, it is clear that the cases on the right-hand side are disjoint and the empty path is an element of $\mathcal{P}.$ Also,  $U P_1 D P_2 \in U\mathcal{P}D\mathcal{P}$ is contained in  $\mathcal{P}$ if $P_1$ is not the empty path, and is contained in $UD\mathcal{P}$ otherwise. In $(1)$, $U^{m+1}D^{n+1}(\mathcal{P}D)^{m-n}\mathcal{P}$ is contained in $\mathcal{P}$, since all ascending runs clearly avoid restrictions on $\mathcal{P}$ and the descending runs are formed by concatenating down-steps to descending runs of length of at least $n-1$. Similarly in $(2)$, we have that $(U\mathcal{P})^{n-m} U^{m+1} D^{n+1} \mathcal{P}$ is contained in $\mathcal{P}$. Thus, $\mathcal{P}$  satisfies the given grammatical equation in both cases.
   
\end{Proof}

\begin{Prop}
        Let $r,k \in \Z^+$ and let $\mathcal{P}$ be the set of Dyck paths
        avoiding ascending runs of length $\{1,...,r\}$ and descending runs of
        length $\{k+1,...,r\}$. Then the 'grammar` of $\mathcal{P}$ is 
        \begin{equation*}
            \mathcal{P}  \cup UD\mathcal{P} \cup U^{r+1}D^{k} (D\mathcal{P})^{r+1-k}
                = \{EmptyPath\} \cup U \mathcal{P}D \mathcal{P}
                    \cup U^{r+1}D^{r+1}\mathcal{P} \cup U^{r+1} (DP)^{r+1}
        \end{equation*}
\end{Prop}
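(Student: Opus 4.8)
The plan is to prove the stated identity as an equality of \emph{sets} of paths, establishing the two inclusions $\subseteq$ and $\supseteq$ by checking each summand in turn. (The decomposition is genuinely ambiguous---for instance $U^{r+1}D^{r+1}\mathcal{P}$ is already contained in $U^{r+1}(D\mathcal{P})^{r+1}$---which is presumably why no generating-function identity accompanies this proposition, unlike the earlier ones.) The single fact that drives everything is the following preliminary observation, which I would isolate and reuse repeatedly: because up-runs must have length at least $r+1$, the \emph{final} down-run of any nonempty path in $\mathcal{P}$, and indeed of any Dyck excursion whose up-runs avoid $\{1,\ldots,r\}$, has length at least $r+1$. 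Reason: a final down-run of length $L$ descends from height $L$ to $0$ and is preceded by an up-run of length $\geq r+1$ ending at height $L$, forcing $L \geq r+1$.

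For the inclusion $\supseteq$ (every word in $\mathcal{P}$, in $UD\mathcal{P}$, and in $U^{r+1}D^{k}(D\mathcal{P})^{r+1-k}$ is generated by the right-hand side), I would parse a nonempty $P \in \mathcal{P}$ by its initial up-run, of length $\ell \geq r+1$. If $\ell > r+1$, the first-return decomposition $P = U P_1 D P_0$ works: $P_0 \in \mathcal{P}$ is clear, and the preliminary observation shows the down-run of $P$ hitting $0$ has length $\geq r+2$ (if it had length $r+1$, its preceding up-run of length $\geq r+1$ would start at height $0$, contradicting either that this is the first return or that $\ell > r+1$), so $P_1 \in \mathcal{P}$ and $P \in U\mathcal{P}D\mathcal{P}$. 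If $\ell = r+1$, the first down-run has length $d \leq r+1$ avoiding $\{k+1,\ldots,r\}$, hence $d \leq k$ or $d = r+1$; the case $d = r+1$ gives $P \in U^{r+1}D^{r+1}\mathcal{P}$, and the case $d \leq k$ gives $P = U^{r+1}W$, where the first-passage decomposition $W = DA_1 \cdots DA_{r+1}$ places $P$ in $U^{r+1}(D\mathcal{P})^{r+1}$. Here I must check each excursion $A_j \in \mathcal{P}$, which again follows from the preliminary observation: the up-runs of $A_j$ are up-runs of $P$, and its final down-run has length $\geq r+1$. The inclusions $UD\mathcal{P} \subseteq U\mathcal{P}D\mathcal{P}$ (take the first factor empty) and $U^{r+1}D^{k}(D\mathcal{P})^{r+1-k} \subseteq U^{r+1}(D\mathcal{P})^{r+1}$ (write $D^k = (D\epsilon)^k$) are then immediate.

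For the inclusion $\subseteq$ I would show each summand of the right-hand side lies in the left. The empty path and $U^{r+1}D^{r+1}\mathcal{P}$ clearly lie in $\mathcal{P}$. For $U\mathcal{P}D\mathcal{P}$, a word $UP_1DP_0$ with $P_1 = \epsilon$ lands in $UD\mathcal{P}$, while $P_1 \neq \epsilon$ lands in $\mathcal{P}$, since by the preliminary observation $P_1$ ends in a down-run of length $\geq r+1$, so absorbing the explicit $D$ keeps every down-run out of $\{k+1,\ldots,r\}$ (and the up-runs are obviously unaffected). The crux is $U^{r+1}(D\mathcal{P})^{r+1} \subseteq \mathcal{P} \cup U^{r+1}D^{k}(D\mathcal{P})^{r+1-k}$: given $P = U^{r+1}DA_1 \cdots DA_{r+1}$ with each $A_j \in \mathcal{P}$, I would argue that \emph{every} down-run of $P$ except the very first automatically avoids $\{k+1,\ldots,r\}$---a down-run absorbing the trailing descent of a nonempty excursion already has length $\geq r+2$, and down-runs internal to an $A_j$ are fine because $A_j \in \mathcal{P}$---so the only possible obstruction to $P \in \mathcal{P}$ is the length $j^\ast$ of the initial down-run, namely the index of the first nonempty excursion. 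If $j^\ast \leq k$ or $j^\ast = r+1$ then $P \in \mathcal{P}$; if $j^\ast \in \{k+1,\ldots,r\}$ then $P$ begins with $U^{r+1}D^{j^\ast}$ and hence lies in $U^{r+1}D^{k}(D\mathcal{P})^{r+1-k}$. This down-run bookkeeping for $U^{r+1}(D\mathcal{P})^{r+1}$---showing that the initial descent is the unique place a forbidden descending run can arise, which is exactly what the correction term $U^{r+1}D^{k}(D\mathcal{P})^{r+1-k}$ records---is the step I expect to require the most care.
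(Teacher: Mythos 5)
Your two containment arguments track the paper's proof almost step for step: the same key observation (the final descent of any nonempty excursion whose up-runs exceed $r$ has length at least $r+1$), the same case split on the initial up-run ($\ell > r+1$ giving the first-return parse into $U\mathcal{P}D\mathcal{P}$, $\ell = r+1$ giving the first-passage parse into $U^{r+1}(D\mathcal{P})^{r+1}$), and the same crux analysis of a word $U^{r+1}DA_1\cdots DA_{r+1}$ by the index of its first nonempty excursion, which is exactly the paper's ``smallest $i$ such that $P_{r-i}$ is not the empty path.'' As a proof of \emph{set} equality your argument is correct and complete.

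The genuine gap is in how you treat the ambiguity. You prove the equation only as an equality of sets and suggest the ambiguity ``is presumably why no generating-function identity accompanies this proposition.'' That reads the situation backwards, and the multiplicity bookkeeping is precisely what the paper's proof adds beyond yours. The equation is engineered so that the double counting \emph{matches} on the two sides: on the left, $\mathcal{P} \cap U^{r+1}D^{k}(D\mathcal{P})^{r+1-k} = U^{r+1}D^{r+1}\mathcal{P}$ (a word of the third left-hand type lies in $\mathcal{P}$ exactly when its initial descent has length $r+1$), while on the right $U^{r+1}D^{r+1}\mathcal{P}$ is contained in $U^{r+1}(D\mathcal{P})^{r+1}$ and the remaining cases are disjoint; the paper checks both facts and notes that ``this resolves the issue of double counting,'' so every word is produced with equal multiplicity on each side (words of $U^{r+1}D^{r+1}\mathcal{P}$ twice, all others once) and a weight-enumerator identity, namely $P + zP + z^{r+1}P^{r+1-k} = 1 + zP^{2} + z^{r+1}P + z^{r+1}P^{r+1}$, does follow---which is the point of such ``grammatical equations'' throughout the paper. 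Your parses as written are not multiplicity-consistent: in the reverse direction you route both the instance of a word $U^{r+1}D^{r+1}P_0$ coming from the term $U^{r+1}D^{r+1}\mathcal{P}$ and the instance of the same word in $U^{r+1}(D\mathcal{P})^{r+1}$ with $j^{\ast} = r+1$ to the single left-hand instance in $\mathcal{P}$, leaving its left-hand instance in $U^{r+1}D^{k}(D\mathcal{P})^{r+1-k}$ unmatched. The fix is small---route your case $j^{\ast} = r+1$ to $U^{r+1}D^{k}(D\mathcal{P})^{r+1-k}$, as the paper does by sending all of $i \geq k$ there---but without it, or the paper's explicit intersection accounting, the proof does not deliver the counting statement the proposition is meant to encode.
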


\begin{Proof}
If $P\in\mathcal{P}$ is the empty path, then the grammar uniquely parses $P$. Otherwise, $P$  begins an ascending run of length $\ell>r$, and we can deduce that it also ends with a descending run of length $L>r$. If $\ell>r+1$, then let $D_0$ denote the first time that $P$ returns to the $x-$axis and write
\[P=U P_1 D_0 P_0.\]
It is easy to see that $P_0$ is a path in $\mathcal{P}$ and $P_1$ is a Dyck path shifted to height 1. The initial ascending run in $P_1$ has length $\ell-1>r$. Thus, all ascending runs in $P_1$ have length of at least $r+1$ and, since $P_1$ is a shifted Dyck path, the final descending run in $P_1$ must also have length of at least $r+1$. From here, it is easy to see that $P_1$ has the same restrictions on ascending and descending runs as $P$. $P$ is therefore uniquely parsed into the case $U \mathcal{P}D \mathcal{P}.$

Suppose $\ell=r+1$. Let $D_i$ be the step where $P$ returns to height $i$ for the first time and write
\[P=U^{r+1}D_r P_r ... D_0 P_0.\]
$P_i$ is a Dyck path for all $i$ and, if $P_i$ is not the empty path, it must end with a descending run of length $r+1$ by restrictions on ascending runs. Thus $P_i$ is a path in $\mathcal{P}$,  and $P$ is parsed into the case $U^{r+1}(D \mathcal{P})^{r+1}$.

It is trivial that $UD\mathcal{P}$ is contained $U \mathcal{P} D \mathcal{P}$ and $U^{r+1} D^k (D \mathcal{P})^{r+1-k}$ is contained in $U^{r+1}(D \mathcal{P})^{r+1}$. Thus, the left-hand side is generated by the right-hand side. Note that, on the left-hand side,
\[UD\mathcal{P} \cap \mathcal{P} = UD\mathcal{P} \cap  U^{r+1} D^k (D \mathcal{P})^{r+1-k}=\emptyset, \]
however
\[\mathcal{P} \cap  U^{r+1} D^k (D \mathcal{P})^{r+1-k}=U^{r+1}D^{r+1} \mathcal{P}.\]

Looking at the right-hand side, it is clear that $\{EmptyPath\}, U \mathcal{P} D \mathcal{P}$, and $U^{r+1} (D\mathcal{P})^{r+1}$ are disjoint, and $U^{r+1}D^{r+1}\mathcal{P}$ is contained in $U^{r+1} (D\mathcal{P})^{r+1}$. Note that this resolves the issue of double counting paths in  $U^{r+1}D^{r+1} \mathcal{P}$ on the left-hand side. Thus, all that remains to show is that all the paths generated by the right-hand side are contained in the left-hand side.

The path $U P_1 D P_0$ in $U \mathcal{P} D \mathcal{P}$ is clearly in $\mathcal{P}$ if $P_1$ is not the empty path and in $UD\mathcal{P}$ otherwise. For $W$ in $U^{r+1}(D \mathcal{P})^{r+1}$, write
 \[W= U^{r+1}D_r P_r...D_1 P_1 D_0 P_{0}.\]
 Choose the smallest $i$ such that $P_{r-i}$  is not the empty path or, if no such $i$ exists, set $i=r$. Then
the first descending run in $W$ has length $i+1$.  If $i\geq k$ then $W$ is an element of $U^{r+1} D^k (D \mathcal{P})^{r+1-k}$. Otherwise, we claim that $W$ is a path in $\mathcal{P}$.  It is clear that $W$ is a Dyck path and we have seen that nonempty $P_j \in \mathcal{P}$ must end in a descending run of length of at least $r+1$. Thus, we only need to show that the first descending run in $W$ follows the restrictions in $\mathcal{P}.$ This is clearly true since $i<k$. Hence $W\in \mathcal{P}$, and $\mathcal{P}$  satisfies the  grammatical equation as desired.
\end{Proof}

\section{Conclusion}%
\label{sec:conclusion}

We have given several grammatical proofs of various combinatorial results (some lifted from \cite{z}) and established some infinite families of grammars.
Our methods work because we are able to derive \emph{context-free grammars}
describing certain restricted classes Dyck paths, namely when our restrictions involved sets of arithmetic progressions. It is natural to ask if context-free grammars exist for other types of restrictions, but this is beyond our current scope.


\begin{thebibliography}{1}
\bibitem{motzkin}
Donaghey, R. and Shapiro, L.W., 1977. ``Motzkin numbers.'' Journal of
Combinatorial Theory, Series A, 23(3), pp.291-301.

\bibitem{eu}
Eu, Sen-Peng, Shu-Chung Liu, and Yeong-Nan Yeh. ``Dyck paths with peaks
avoiding or restricted to a given set.'' Studies in Applied Mathematics 111,
no. 4 (2003): 453-465.

\bibitem{flajolet}
Flajolet, Philippe, and Robert Sedgewick. \emph{Analytic Combinatorics}.
Cambridge University Press, 2009.

\bibitem{oeis} OEIS Foundation Inc. (2020), The On-Line Encyclopedia of Integer
Sequences, \url{http://oeis.org}.

\bibitem{peart}
Peart, P. and Woan, W.J., 2001. ``Dyck Paths With No Peaks At Height k.'' J.
Integer Seq, 4(1).

\bibitem{z}
Ekhad, Shalosh B., and Doron Zeilberger. ``Automatic Counting of Restricted
Dyck Paths via (Numeric and Symbolic) Dynamic Programming.'' arXiv preprint
arXiv:2006.01961 (2020).
\end{thebibliography}
\end{document}